\newtheorem{thm}{Theorem}[section]
\newtheorem{cor}[thm]{Corollary}
\newtheorem{lem}[thm]{Lemma}
\newtheorem*{notation}{Notation}
\theoremstyle{definition}
\newtheorem{exa}[thm]{Example}
\def\Out{\mathrm{Out}}
\def\Aut{\mathrm{Aut}}
\newcommand{\Syl}{\operatorname{Syl}\nolimits}
\def\@nameedef#1{\expandafter\edef\csname #1\endcsname}
\def\@nameedef#1{\expandafter\edef\csname #1\endcsname}
\def\@nameedef#1{\expandafter\edef\csname #1\endcsname}
\date{\today}
\title{A $p$-nilpotency criterion for finite groups}
\author{Antonio D\'{i}az Ramos}
\author{Antonio Viruel}
\address{Departamento de {\'A}lgebra, Geometr{\'\i}a y Topolog{\'\i}a,
Universidad de M{\'a}\-la\-ga, Apdo correos 59, 29080 M{\'a}laga,
Spain.}
\email{adiazramos@uma.es\\
viruel@uma.es}
\thanks{Authors partially supported by MEC grant MTM2016-78647-P and Junta de Andaluc{\'\i}a grant FQM-213.}
\begin{document}

\begin{abstract}
We prove a $p$-nilpotency criterion for finite groups in terms of the element orders of its $p'$-reduced sections that extends a nilpotency criterion by T{\u{a}}rn{\u{a}}uceanu.
\end{abstract}

\maketitle

\section{Introduction}
\label{section:introduction}

In \cite{Tarnauceanu2018}, T{\u{a}}rn{\u{a}uceanu introduces a nilpotency criterion for finite groups by studying element orders. This criterion involves the following generalization of the Euler’s totient function \cite{Tarnauceanu2015},
\[
\varphi(G)=|\{g\in G|o(g)=\exp(G)\}|,
\]
where $G$ is a finite group. Obviously, if $G$ is nilpotent, then $\varphi(G)$ is necessarily different from zero. Moreover, subgroups and quotients of a nilpotent group are also nilpotent, so $\varphi(K)\neq 0$ for every section $K$ of the nilpotent group $G$. T\u arn\u auceanu proves that the converse holds, i.e., if $\varphi(K)\neq 0$ for all sections $K$ of $G$, then $G$ is nilpotent.

In this note, we give the following related criterion for $p$-nilpotency of finite groups, in which we consider instead $p'$-reduced sections.

\begin{thm}\label{thm:main}
Let $G$ be a finite group and let $p$ be a prime. Then the following are equivalent:
\begin{enumerate}
\item[(1)] $G$ is $p$-nilpotent.
\item[(2)] $\varphi(K)\neq 0$ for each $p'$-reduced section $K$ of $G$.
\item[(3)] $\varphi(K)\neq 0$ for each $p'$-reduced section $K$ of $N_G(Q)$, where $Q$ is any $p$-centric $p$-subgroup of $G$.
\item[(4)] $\varphi(K)\neq 0$ for each $p'$-reduced section $K$ of $N_G(Q)$, where $Q$ is any $p$-centric and $p$-radical $p$-subgroup of $G$.
\end{enumerate}
\end{thm}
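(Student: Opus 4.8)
The plan is to establish $(1)\Rightarrow(2)\Rightarrow(3)\Rightarrow(4)$ and then to close the cycle with $(4)\Rightarrow(1)$, the last implication combining a ``local'' reformulation of~$(2)$ with Alperin's fusion theorem. For $(1)\Rightarrow(2)$: subgroups and quotients of $p$-nilpotent groups are $p$-nilpotent, so every section $K$ of $G$ is $p$-nilpotent; if moreover $K$ is $p'$-reduced then $O_{p'}(K)=1$, and writing $K=O_{p'}(K)\rtimes P_K$ we see that $K=P_K$ is a $p$-group, hence nilpotent, so $\varphi(K)\neq 0$ by the remark in the introduction. There is nothing to prove for $(2)\Rightarrow(3)$ and $(3)\Rightarrow(4)$: a section of $N_G(Q)$ is a section of~$G$, and a $p$-centric, $p$-radical $p$-subgroup is $p$-centric.

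The heart of the argument is the assertion that, for an \emph{arbitrary} finite group $H$, one has that $H$ is $p$-nilpotent if and only if $\varphi(K)\neq0$ for every $p'$-reduced section $K$ of $H$ (this is $(1)\Leftrightarrow(2)$ applied to $H$). The forward direction is the computation just made. For the converse I argue by contraposition: if $H$ is not $p$-nilpotent, pick among its non-$p$-nilpotent sections one, $M$, of least order; then every proper section of $M$ is $p$-nilpotent, so the classification of minimal non-$p$-nilpotent groups gives $M=O_p(M)\rtimes C$ with $O_p(M)\in\Syl_p(M)$, with $C$ cyclic of order $q^{b}$ for some prime $q\neq p$, and with $C$ acting faithfully on $V:=O_p(M)/\Phi(O_p(M))$ and $C_V(C)=0$ (note $V\neq1$, else $M$ would be a $q$-group). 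Then $K:=M/\Phi(O_p(M))\cong V\rtimes C$ is a section of $H$ which is $p'$-reduced, since any normal $p'$-subgroup of $K$ centralizes $V$ and hence lies in $C_K(V)=V$, so is trivial. Finally $\exp(K)=pq^{b}$, but for each $v\in V$ and each generator $c$ of $C$ the element $(vc)^{q^{b}}$ equals (additively on $V$) the $C$-invariant element $\sum_{j=0}^{q^{b}-1}{}^{c^{j}}v$, hence $0$; so every element of $K$ mapping onto a generator of $C$ has order dividing $q^{b}$, every other element has $q$-part of order at most $q^{b-1}$, and the nontrivial elements of $V$ have order $p$. No element of $K$ attains $\exp(K)$, i.e.\ $\varphi(K)=0$, a contradiction.

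For $(4)\Rightarrow(1)$ fix $P\in\Syl_p(G)$. By~$(4)$ and the previous assertion applied to each $H=N_G(Q)$, the normalizer $N_G(Q)$ is $p$-nilpotent for every $p$-centric, $p$-radical $p$-subgroup $Q$. Passing to a $G$-conjugate we may assume $N_P(Q)\in\Syl_p(N_G(Q))$; then $N_G(Q)=O_{p'}(N_G(Q))\rtimes N_P(Q)$, and from $[O_{p'}(N_G(Q)),Q]\leq O_{p'}(N_G(Q))\cap Q=1$ we get $O_{p'}(N_G(Q))\leq C_G(Q)$, so $N_G(Q)=C_G(Q)N_P(Q)$ and $\Aut_G(Q)=N_G(Q)/C_G(Q)=\Aut_P(Q)$. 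This applies to $Q=P$ and, decisively, to every essential subgroup $Q$ of $G$: such a $Q$ is $p$-centric by definition, and $p$-radical because $O_p(\Out_G(Q))=1$ (as $\Out_G(Q)$ has a strongly $p$-embedded subgroup) forces $O_p(N_G(Q))\leq QC_G(Q)=Q\times O_{p'}(C_G(Q))$, whence $O_p(N_G(Q))=Q$. By Alperin's fusion theorem in its refined form, every conjugation between subsets of $P$ realized in $G$ is a composite of conjugations by elements of $P$ and restrictions of elements of the groups $\Aut_G(Q)$ with $Q=P$ or $Q$ essential; since all of these equal the corresponding $\Aut_P(Q)$, the group $P$ controls its own $p$-fusion in $G$, hence $G$ is $p$-nilpotent by Frobenius's normal $p$-complement theorem.

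The step I expect to be the genuine obstacle is this last one, passing from $p$-nilpotency of all the $N_G(Q)$ to $p$-nilpotency of~$G$. A bare application of Frobenius's criterion is not enough, since it quantifies over \emph{all} $p$-subgroups whereas the $p$-centric, $p$-radical ones form a much sparser family; the role of Alperin's fusion theorem is exactly to reduce the control of $p$-fusion to the essential subgroups, which turn out to be automatically $p$-centric and $p$-radical. The remaining inputs — the structure theorem for minimal non-$p$-nilpotent groups and the order arithmetic in $V\rtimes C$ — are classical, and I would only indicate them.
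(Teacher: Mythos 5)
Your proposal is correct in outline, but it takes a genuinely different route from the paper on the decisive point. The paper never passes through the $p$-nilpotency of the normalizers $N_G(Q)$: for an $\CF$-centric, $\CF$-radical $Q$ it applies its Lemma 2 to the extension $Q\to N_G(Q)/O_{p'}(C_G(Q))\to \Out_G(Q)$ and concludes directly that $\Out_G(Q)=1$, the lemma producing a $p'$-reduced section $V\rtimes \BZ/q$ with $\varphi=0$ by completely elementary means (an element of order $q$ acting nontrivially on $Q$, Burnside's coprime-action lemma on the Frattini quotient, Maschke). You instead first prove the global equivalence $(1)\Leftrightarrow(2)$ for an arbitrary group by taking a minimal non-$p$-nilpotent section and invoking the classification of minimal non-$p$-nilpotent (It\^o--Schmidt) groups, then apply it to each $N_G(Q)$, deduce $\Aut_G(Q)=\Aut_P(Q)$ for $P$ and for the (fully normalized) essential subgroups, and finish with Alperin plus Frobenius exactly as the paper does. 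Both arguments are valid, and your observation that essential subgroups are automatically $p$-centric and $p$-radical is correct and is exactly the point the paper exploits. What your route costs is the paper's advertised feature: its proof is independent of the classification of minimal non-nilpotent groups, so that the Corollary reproves T\u{a}rn\u{a}uceanu's theorem by a new argument, whereas your proof of $(2)\Rightarrow(1)$ reintroduces that classification. What your route buys is a cleaner conceptual statement at the local level ($N_G(Q)$ is $p$-nilpotent, hence $\Aut_G(Q)=\Aut_P(Q)$) and a proof of $(2)\Rightarrow(1)$ that needs no fusion systems at all.

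One justification in your minimal-counterexample step is off as stated: the classification does not give that $C$ acts faithfully on $V=O_p(M)/\Phi(O_p(M))$; for a Schmidt group one has $\Phi(C)\leq Z(M)$, so the action has kernel $\Phi(C)$ whenever $|C|=q^b$ with $b\geq 2$ (e.g.\ the dicyclic group of order $12$ for $p=3$), and faithfulness is exactly what you need for $K=V\rtimes C$ to be $p'$-reduced. The conclusion is still true in your setting, but for a reason you must supply: since $M$ is minimal among non-$p$-nilpotent \emph{sections}, the quotient $M/C_C(O_p(M))$ is a strictly smaller non-$p$-nilpotent section unless $C_C(O_p(M))=1$, which forces the faithful action (and in fact $b=1$); alternatively, replace $K$ by $V\rtimes (C/C_C(V))$. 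With that one-line repair, and noting that $C_V(C)=0$ follows from $V$ being a nontrivial chief factor (which \emph{is} part of the classification), your argument is complete.
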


Recall that $Q$ is $p$-centric if $Z(P)\in\Syl_p(C_G(P))$ and that $Q$ is $p$-radical if $N_G(P)/P$ is $p$-reduced. Our proof is formulated in the modern language of fusion systems \cite{BLO2}, following the trend of the works \cite{BESW}, \cite{BGH}, \cite{CSV}, \cite{D}, \cite{DGPS}, \cite{DEV}, \cite{GRV}, \cite{KLN} and \cite{LZ}. Our proof of Theorem \ref{thm:main}, carried out in the following section, is independent of the classification of minimal non-nilpotent groups. Therefore, Theorem \ref{thm:main} provides an alternative proof of the main result in \cite{Tarnauceanu2018}.
%We obtain the following result without resorting to the classification of minimal non-nilpotent groups as done in \cite{Tarnauceanu2018}.

\begin{cor}\label{cor:main}
Let $G$ be a finite group. Then the following are equivalent:
\begin{enumerate}
\item[(1)] $G$ is nilpotent.
\item[(2)] $\varphi(K)\neq 0$ for each section $K$ of $G$.
\end{enumerate}
\end{cor}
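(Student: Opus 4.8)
The plan is to deduce Corollary~\ref{cor:main} directly from Theorem~\ref{thm:main}, together with the elementary fact that a finite group is nilpotent if and only if it is $p$-nilpotent for every prime $p$ dividing its order.

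For $(1)\Rightarrow(2)$ I would observe that every section $K$ of a nilpotent group is again nilpotent, hence is the direct product $K=P_1\times\cdots\times P_r$ of its Sylow subgroups. Choosing in each $P_i$ an element $x_i$ of order $\exp(P_i)$, the element $(x_1,\ldots,x_r)$ has order $\prod_i\exp(P_i)=\exp(K)$, so $\varphi(K)\neq 0$.

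For $(2)\Rightarrow(1)$, fix a prime $p$ dividing $|G|$. Every $p'$-reduced section of $G$ is in particular a section of $G$, so the hypothesis of $(2)$ supplies condition $(2)$ of Theorem~\ref{thm:main}, and therefore $G$ is $p$-nilpotent; write $H_p$ for its normal $p$-complement, so that $G/H_p$ is a $p$-group. Carrying this out for every prime $p\mid|G|$ and using that $\bigcap_p H_p=1$ (its order divides every $|H_p|$, hence is coprime to each such $p$), the diagonal map embeds $G$ into the nilpotent group $\prod_p G/H_p$; thus $G$ is nilpotent.

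I do not expect a genuine obstacle here, since all the substance is already packaged in Theorem~\ref{thm:main}: the two auxiliary facts — that $\varphi(K)\neq 0$ for nilpotent $K$, and that $p$-nilpotency for all $p$ forces nilpotency — are standard. The only point requiring a moment's care is that the class of $p'$-reduced sections is contained in the class of all sections, so that hypothesis $(2)$ of the Corollary is, for each $p$, at least as strong as hypothesis $(2)$ of the Theorem.
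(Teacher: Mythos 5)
Your proof is correct and follows essentially the same route as the paper: $(1)\Rightarrow(2)$ via the nilpotent case, and $(2)\Rightarrow(1)$ by noting that $p'$-reduced sections are sections, invoking Theorem~\ref{thm:main}$(2)$ for each prime $p$, and using that $p$-nilpotency for all $p$ implies nilpotency (a standard fact the paper simply cites, which you spell out via the trivial intersection of the normal $p$-complements).
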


Regarding Theorem \ref{thm:main}$(2)$, one could wonder whether $\varphi(K)\neq 0$ for the particular $p'$-reduced section $K=G/O_{p'}(G)$ is enough for the $p$-nilpotency of $G$. As the next example shows, this is not the case.

\begin{exa}
We exhibit a finite group $G$ such that $O_{p'}(G)=1$, $\varphi(G)\neq 0$ and $G$ is not $p$-nilpotent.

We start considering a group $K$ such that $K$ is $p'$-reduced and $\varphi(K)=0$. Such group can be obtained as a semidirect product via Lemma \ref{lemma:1} below. Then $K$ is not $p$-nilpotent by Theorem \ref{thm:main}$(2)$. If $\exp(K)=p_1^{e_1}\cdots p_s^{e_s}$ with $p_i$'s different primes, there exist, for each $i$, an element $k_i\in K$ with $o(k_i)=p_i^{e_i}$. Define $G$ as the direct product of $s$ copies of $K$. Then clearly $\exp(G)=\exp(K)=o((k_1,\ldots,k_s))$ and hence $\varphi(G)\neq 0$. Moreover, $O_{p'}(G)=1$ and $G$ is not $p$-nilpotent as  $K$ is neither.
\end{exa}

\begin{notation} We denote by $O_{p'}(G)$ the largest normal subgroup of $G$ of order prime to $p$, and by $O_p(G)$ the largest normal $p$-subgroup of $G$. We say that $G$ is $p'$-reduced if $O_{p'}(G)=1$ and that $G$ is $p$-reduced if $O_p(G)=1$. For terminology and results on fusion systems, we refer the reader to \cite{AKO} and \cite{Craven2011}.
\end{notation}

\section{Proof of the theorem}
\label{section:proof}

We need two preliminary results.

\begin{lem}\label{lemma:1}
Let $p$ and $q$ be different primes and let $V$ be a non-trivial simple $\BF_p[\BZ/q]$-module. Then $\varphi(V\rtimes \BZ/q)=0$.
\end{lem}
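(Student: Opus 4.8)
The plan is to compute the set of element orders of $G:=V\rtimes\BZ/q$ explicitly and then observe that $\exp(G)$ is realized by no element of $G$. Write $V$ additively, so that $V$ is an elementary abelian $p$-group, and let $t$ be a generator of $\BZ/q$. Since $q$ is prime, the kernel of the action of $\BZ/q$ on $V$ is trivial or everything, i.e.\ $t$ acts either faithfully or trivially; a trivial action would make every $\BF_p$-subspace a submodule, forcing $V\cong\BF_p$ with trivial action and contradicting the hypothesis, so $t$ acts faithfully, in particular nontrivially.

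The only point that needs a genuine argument is the vanishing on $V$ of the norm operator $N:=1+t+t^2+\cdots+t^{q-1}$ acting on $V$. Here I would use simplicity: $\ker(t-\id)$ is a submodule of $V$, and it is proper since $t$ acts nontrivially; as $V$ is simple this forces $\ker(t-\id)=0$, so $t-\id$ is injective, hence invertible, on the finite group $V$. Since $(t-\id)N=t^q-\id=0$ on $V$, invertibility of $t-\id$ yields $N=0$ on $V$. This is the step where all three hypotheses — $V$ simple, $V$ nontrivial, $q$ prime — are genuinely used; everything else is bookkeeping, so I do not expect a further obstacle.

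With $N=0$ on $V$ in hand I would read off the orders. A general element of $G$ is $(v,t^j)$ with $v\in V$ and $0\le j<q$. If $j=0$ it lies in $V$, so its order is $1$ or $p$. If $j\neq 0$, then in the semidirect product $(v,t^j)^q=\bigl(\sum_{i=0}^{q-1}t^{ij}v,\ e\bigr)$, and since $j$ is invertible modulo $q$ the exponents $ij$ run over all residues mod $q$, so $\sum_{i=0}^{q-1}t^{ij}v=Nv=0$; thus $(v,t^j)^q=e$, and as $(v,t^j)\neq e$ and $q$ is prime its order is exactly $q$. Hence every element of $G$ has order in $\{1,p,q\}$, while both $p$ (realized by any nonzero $v\in V$, which exists as $V\neq 0$) and $q$ (realized by $(0,t)$) do occur. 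Therefore $\exp(G)=pq$, which exceeds $\max(p,q)$ and so is the order of no element of $G$; that is, $\varphi(G)=0$.
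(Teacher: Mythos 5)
Your proof is correct, but it reaches the conclusion by a genuinely different route than the paper. The paper argues by contradiction with a single hypothetical element $x$ of order $pq$: then $x^q\in V$, and $x^p$ both centralizes $x^q$ and projects onto a generator of $\BZ/q$, so $\langle x^q\rangle\cong\BF_p$ would be a nonzero trivial submodule, which simplicity plus non-triviality of $V$ forbids. You instead work with the norm operator $N=1+t+\cdots+t^{q-1}$: simplicity and non-triviality force $\ker(t-\id)=0$, hence $t-\id$ is injective on the finite module $V$, and $(t-\id)N=0$ then gives $N=0$; from this you read off the entire order spectrum of $V\rtimes\BZ/q$ (every element has order $1$, $p$ or $q$), so the exponent $pq$ is attained by no element. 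Both arguments pivot on the same underlying fact, namely that a non-trivial simple $\BF_p[\BZ/q]$-module has no nonzero fixed vectors; the paper extracts a fixed vector directly from the hypothetical element of order $pq$, which is shorter, while your computation is a bit longer but yields more: it proves, rather than merely asserts, that $\exp(V\rtimes\BZ/q)=pq$ (the paper states this without comment), and it determines all element orders of the semidirect product. Your preliminary observation that the action must be faithful is fine but not actually needed for the rest: non-triviality of the action alone already makes $\ker(t-\id)$ a proper submodule, which is all your norm argument uses.
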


\begin{proof}
Note that $\exp(V\rtimes \BZ/q)=pq$. Assume that there exists $x\in V\rtimes \BZ/q$ satisfying $o(x)=pq$. Then $x^q\in V$ and $x^p$ centralizes $x^q$. As $x^p$ projects onto a generator of $\BZ/q$, $\BF_p\cong\langle x^q\rangle\leq V$ is a trivial $\BF_p[\BZ/q]$-submodule, which is not possible by hypothesis.
\end{proof}

\begin{lem}\label{lemma:2}
Consider a short exact sequence of groups,
\[
\xymatrix{
P\ar[r]& G \ar[r]^\pi & K,
}
\]
where $1\neq P$ is a $p$-group, $C_G(P)\leq P$ and $1\neq K$ is $p$-reduced. Then there exists a $p'$-reduced section $H$ of $G$ with $\varphi(H)=0$.
\end{lem}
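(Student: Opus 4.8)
The goal is to produce, inside $G$, a $p'$-reduced section $H$ with $\varphi(H)=0$, using only that $P$ is a nontrivial $p$-group with $C_G(P)\le P$, and that $K=G/P$ is nontrivial and $p$-reduced. The natural strategy is to reduce to the model situation of Lemma \ref{lemma:1}, where a semidirect product $V\rtimes \BZ/q$ with $V$ a nontrivial simple $\BF_p[\BZ/q]$-module is exhibited with $\varphi=0$. So the plan is: first extract from the hypotheses a prime $q\ne p$ dividing $|K|$ (possible since $K\ne 1$ is $p$-reduced, hence $|K|$ cannot be a power of $p$, so it has a $p'$-part); then find an element of order $q$ in $G$, or rather an order-$q$ subgroup $\langle t\rangle$ of $G$ mapping isomorphically to a subgroup of $K$; then let this $\langle t\rangle$ act on $P$ and pass to a suitable subquotient of $P$ on which the action is nontrivial and irreducible.

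**Key steps, in order.** (1) Since $C_G(P)\le P$, the conjugation action of $G$ on $P$ has kernel contained in $P$, so $G/P$ embeds in $\Out(P)$... more precisely $C_G(P)=Z(P)$ and $G/C_G(P)\le \Aut(P)$. Pick a prime $q\mid |K|$, $q\ne p$ (exists since $K$ is $p$-reduced and nontrivial, so $O_p(K)\ne K$ forces $|K|$ not a $p$-power — one should double-check: $p$-reduced only says $O_p(K)=1$; if $K$ were a $p$-group then $O_p(K)=K\ne 1$, contradiction, so indeed $q$ exists). Lift a $q$-element: take $Q_0\in\Syl_q(G)$, its image in $K$ is a Sylow $q$-subgroup of $K$ hence nontrivial, so $Q_0\ne 1$ and contains an element $t$ of order $q$ with $\pi(t)\ne 1$, i.e. $t\notin P$. (2) Let $T=\langle t\rangle\cong\BZ/q$ act on $P$ by conjugation. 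I claim this action is nontrivial: if $t$ centralized $P$, then $t\in C_G(P)\le P$, contradicting $t\notin P$. (3) Now I need a simple $\BF_p[T]$-subquotient of $P$ on which $T$ acts nontrivially, together with a semidirect-product structure realizing Lemma \ref{lemma:1}. Standard coprime-action theory applies: $T$ acts on the $p$-group $P$ coprimely, so $T$ acts nontrivially on some chief factor, or more elementarily on $P/\Phi(P)$ — but one must ensure nontriviality descends to $P/\Phi(P)$, which it does by the Burnside-type argument (a $p'$-automorphism acting trivially on $P/\Phi(P)$ acts trivially on $P$). So $T$ acts nontrivially on the $\BF_p[T]$-module $P/\Phi(P)$; decompose into indecomposables (which over $\BF_p[\BZ/q]$, $q\ne p$, are simple since the group algebra is semisimple) and pick a simple summand $V$ with nontrivial $T$-action. (4) Form $H=V\rtimes T$; by Lemma \ref{lemma:1}, $\varphi(H)=0$, and $H$ is $p'$-reduced (its unique normal $p'$-subgroup intersects $V$ trivially and projects trivially, hence is trivial — or just: $V=O_p(H)$ has $p'$-index $q$ prime, and $O_{p'}(H)$ would be normal of order dividing $q$, forcing $H=V\times O_{p'}(H)$, impossible by nontrivial action). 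Finally, $H$ is a section of $G$: it is a subquotient of $\langle P,T\rangle\le G$, namely a subgroup of $(P/\Phi(P))\rtimes T$, which is a section of the subgroup $P\rtimes$-type group $P\langle t\rangle$... I would phrase this as: $PT/\Phi(P)$ is a quotient of the subgroup $PT\le G$, and $V\rtimes T$ is a subgroup of $PT/\Phi(P)$, hence $H$ is a section of $G$.

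**Main obstacle.** The conceptual steps are all standard coprime-action facts; the one place demanding care is step (3)–(4): making the semidirect-product decomposition $H = V\rtimes T$ genuinely sit as a section of $G$. The subtlety is that $P/\Phi(P)$ is an $\BF_p[T]$-module but $PT/\Phi(P)$ is the relevant section, and I must split off a single simple summand $V$ while keeping the complementary action under control — this uses semisimplicity of $\BF_p[\BZ/q]$ (Maschke, valid as $q\ne p$) to write $P/\Phi(P)=V\oplus W$ as $T$-modules, so that $VT/1$... i.e. $V\rtimes T$ is literally a subgroup of $(V\oplus W)\rtimes T = PT/\Phi(P)$, namely $\{(v,0,s): v\in V, s\in T\}$. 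Verifying that $V\rtimes T$ with nontrivial irreducible action is exactly the setup of Lemma \ref{lemma:1}, and that this section is $p'$-reduced, then closes the argument. I would also remark that if one prefers to avoid $\Phi(P)$ one can instead run the coprime action on a minimal $T$-invariant subgroup of $P$ not centralized by $T$ and take a chief factor, but the Frattini-quotient route is cleanest.
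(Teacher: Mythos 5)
Your proof is correct and follows essentially the same route as the paper: extract an order-$q$ element outside $P$ (the paper raises a lift to a $p$-th power rather than using a Sylow $q$-subgroup, but this is immaterial), observe the conjugation action on $P$ is nontrivial since $C_G(P)\leq P$, pass to $P/\Phi(P)$ via the Burnside coprime-action argument, use Maschke to split off a nontrivial simple $\BF_p[\BZ/q]$-summand $V$, and apply Lemma \ref{lemma:1} to $H=V\rtimes\BZ/q$. Your extra checks that $H$ really is a $p'$-reduced section of $G$ are details the paper leaves implicit, and they are correct.
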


\begin{proof}
As $K$ is non-trivial and $p$-reduced, there exists a prime $1<q\neq p$ such that $q$ divides $|K|$. Hence, we can choose $g\in G$ such that $o(\pi(g))=q$. Moreover, raising $g$ to an appropriate power of $p$, we can assume that $o(g)=q$.

The automorphism induced by $g$ on $P$, $c_g\in \Aut(P)$, cannot be trivial because $C_G(P)\leq P$ and $g\notin P$. Hence, $c_g$ must have order $q$. By Burnside's theorem on coprime actions \cite[Theorem 5.1.4]{Gorenstein}, $c_g$ induces a non-trivial order $q$ automorphism on the Frattini quotient $W=P/\Phi(P)\cong (\BF_p)^r$ ($r\geq 1$). Hence, $W$ is a non-trivial $\BF_p[\BZ/q]$-module.

By Maschke's theorem \cite[Theorem 3.3.1]{Gorenstein}, we can decompose $W$ as a direct sum of simple $\BF_p[\BZ/q]$-submodules. As $W$ is a non-trivial $\BF_p[\BZ/q]$-module, one of these simple submodules is non-trivial. Call it $V$. Then $H=V\rtimes \BZ/q$ is a $p'$-reduced section of $G$ and $\varphi(H)=0$ by Lemma \ref{lemma:1}.
\end{proof}

Now we can prove the main theorem.

\begin{proof}[Proof of Theorem \ref{thm:main}]
To prove $(1)\Rightarrow(2)$, let $G$ be a $p$-nilpotent group and recall that $p$-nilpotency is preserved under subgroups and quotients. Hence, any section $K$ of $G$ is $p$-nilpotent and may be written as $K=O_{p'}(K)S$, with $S\in \Syl_p(K)$. If $K$ is $p'$-reduced, i.e., $O_{p'}(K)=1$, we must have $K=S$ and hence $\varphi(K)\neq 0$.

The remaining non-trivial implication is $(4)\Rightarrow (1)$. We use Frobenius's normal $p$-complement theorem \cite[Theorem 7.4.5]{Gorenstein} in its version for fusion systems \cite[Theorem 1.12]{Craven2011}. Thus, $G$ is $p$-nilpotent if and only if $\CF_S(G)=\CF_S(S)$, where $S\in\Syl_p(G)$ and $\CF=\CF_S(G)$ is the fusion system of $G$ over $S$. In turn, by Alperin's fusion theorem for fusion systems \cite[Theorem A.10]{BLO2}, to show that $\CF=\CF_S(S)$, it is enough to show that $S$ is the only $\CF$-centric and $\CF$-radical subgroup of $S$, and that $\Out_\CF(S)=1$.

So let $Q\leq S$ be $\CF$-centric and $\CF$-radical. Then $C_G(P)=Z(P)\times C'_G(P)$ with $C'_G(P)=O_{p'}(C_G(P))$ \cite[Lemma A.4]{BLO1}, $O_p(\Out_G(Q))=1$, and $Q$ is $p$-centric and $p$-radical. Consider the short exact sequence,
\[
\xymatrix{
Q\ar[r]& N_G(Q)/C'_G(Q) \ar[r] & \Out_G(Q)=N_G(Q)/QC_G(Q).
}
\]
It satisfies the hypothesis of Lemma \ref{lemma:2} unless $\Out_G(Q)=1$.  In the former case, we get a contradiction with hypothesis $(4)$. So $\Out_G(Q)=1$. As $\Out_S(Q)\leq \Out_G(Q)$, this cannot be the case for $Q<S$. So $Q=S$ and $\Out_\CF(S)=\Out_G(S)=1$.
\end{proof}

\begin{proof}[Proof of Corollary \ref{cor:main}]
By the comments in the introduction, it is left to prove that $(2)$ implies $(1)$. Recall that $G$ is nilpotent if and only it is $p$-nilpotent for every prime $p$. Finally, since every $p'$-reduced section of $G$ is obviously a section of $G$, then $(2)$ implies Theorem \ref{thm:main}$(2)$ and therefore $G$ is $p$-nilpotent for every prime $p$.
\end{proof}


\begin{thebibliography}{99}

%\bibitem{Alperin1967} J.L.~Alperin, \emph{Sylow intersections and fusion}, J. Algebra 6, 1967, 222--241.
%
%\bibitem{Goldschmidt1970} David M.~Goldschmidt, \emph{A conjugation family for finite groups}, J. Algebra 16, 1970, 138--142.

\bibitem{AKO} M.~Aschbacher, R.~Kessar, B.~Oliver, \emph{Fusion systems in algebra and topology}, London Mathematical Society Lecture Note Series, 391. Cambridge University Press, Cambridge, 2011.

\bibitem{BESW} A.~Ballester-Bolinches, L.~Ezquerro, N.~Su, Y.~Wang, \emph{On the focal subgroup of a saturated fusion system.} J.\ Algebra \textbf{468} (2016), 72--79.

\bibitem{BGH} D.J.~Benson, J.~Grodal, E.~Henke, \emph{Group cohomology and control of $p$-fusion.} Invent.\ Math.\ \textbf{197} (2014), 491--507.

\bibitem{BLO1} C.~Broto, R.~Levi, B.~Oliver, \emph{Homotopy equivalences of $p$-completed classifying spaces of finite groups}, Invent. Math. 151 (2003), 611--664.

\bibitem{BLO2} C.~Broto, R.~Levi, B.~Oliver, \emph{The homotopy theory of fusion systems.} J. Amer. Math. Soc. 16 (2003), no. 4, 779--856.

\bibitem{CSV} J.~Cantarero, J.~Scherer, A.~Viruel, \emph{Nilpotent $p$-local finite groups.} Ark.\ Mat.\ \textbf{52} (2014), 203--225.

\bibitem{Craven2011} D.A.~Craven, \emph{The theory of fusion systems. An algebraic approach}, Cambridge Studies in Advanced Mathematics, 131. Cambridge University Press, Cambridge, 2011.

\bibitem{D} A.~D\'iaz, \emph{A spectral sequence for fusion systems.} Algebr. Geom. Topol. \textbf{14} (2014), no. 1, 349-378.

\bibitem{DEV} A.~D\'iaz Ramos, A.~Espinosa Baro, A.~Viruel, \emph{A cohomological characterization of nilpotent fusion systems}, Proc. Amer. Math. Soc. 146 (2018), 1447--1450.

\bibitem{DGPS} A.~D\'iaz, A.~Glesser, S.~Park, R.~Stancu, \emph{Tate's and Yoshida's theorems on control of transfer for fusion systems.} J.\ Lond.\ Math.\ Soc.\ \textbf{84} (2011), 475--494.

\bibitem{GRV} J.~Gonz\'alez-S\'anchez, A.~Ruiz, A.~Viruel, \emph{On Thompson's $p$-complement theorems for saturated fusion systems.} Kyoto J.\ Math.\ \textbf{55} (2015),  617--626.

\bibitem{Gorenstein} D.~Gorenstein, {\it Finite groups}, Second edition. Chelsea Publishing Co., New York, 1980.

\bibitem{KLN} R.~Kessar, M.~Linckelmann, G.~Navarro, \emph{A characterisation of nilpotent blocks.} Proc.\ Amer.\ Math.\ Soc.\ \textbf{143} (2015), 5129--5138.

\bibitem{LZ} J.~Liao, J.~Zhang, \emph{Nilpotent fusion systems.} J.\ Algebra \textbf{442} (2015), 438--454.


\bibitem{Tarnauceanu2015} M.~T\u arn\u auceanu, \emph{ A generalization of the Euler’s totient function}, Asian--Eur.\ J.\ Math.\ \textbf{8} (2015), no.\ 4, article ID 1550087.

\bibitem{Tarnauceanu2018} M.~T\u arn\u auceanu, \emph{A nilpotency criterion for finite groups}, to appear in Acta Mathematica Hungarica.

\end{thebibliography}
\end{document}